\newtheorem{theorem}{Theorem}[section]
\newtheorem{proposition}[theorem]{Proposition}
\newtheorem{corollary}[theorem]{Corollary}
\newtheorem{lemma}[theorem]{Lemma}
\newtheorem{definition}[theorem]{Definition}
\newtheorem{remark}[theorem]{Remark}
\numberwithin{equation}{section}
\newcommand{\R}{\mathds R}
\def\br#1\er{\textcolor{red}{#1}} %
      \def\bb#1\eb{\textcolor{blue}{#1}} %
\begin{document}

\title[A note on the existence of tubular neighbourhoods on Finsler manifolds]{A note on the existence of tubular neighbourhoods on Finsler manifolds and minimization of orthogonal geodesics to a submanifold}


\author[B. Alves]{Benigno Alves}

\author[M. A. Javaloyes]{Miguel Angel Javaloyes }



\address{Benigno O. Alves \hfill\break\indent 
Instituto de Matem\'{a}tica e Estat\'{\i}stica\\
Universidade de S\~{a}o Paulo, \hfill\break\indent
 Rua do Mat\~{a}o 1010,05508 090 S\~{a}o Paulo, Brazil}
\email{gguialves@hotmail.com,  benigno@ime.usp.br}

\address{Miguel Angel Javaloyes \hfill\break\indent 
Departamento de Matem\'aticas, \hfill\break\indent
Universidad de Murcia, \hfill\break\indent
Campus de Espinardo,\hfill\break\indent
30100 Espinardo, Murcia, Spain}
\email{majava@um.es}

\thanks{The first author was supported by CNPq (PhD fellowship) and partially
supported by  PDSE-Capes (PhD sandwich program). The second author was partially supported  by Spanish  MINECO/FEDER project reference
MTM2015-65430-P and Fundaci\'on S\'eneca
(Regi\'on de Murcia) project 19901/GERM/15  }





\keywords{Orthogonal geodesics, Tubular neighbourhoods}

\begin{abstract}
In this note, we prove that given a submanifold $P$  in a Finsler manifold $(M,F)$, (i) the orthogonal geodesics to $P$ minimize the distance from $P$ at least in some interval, (ii) there exist tubular neighbourhoods around each point of $P$, (iii) the distance from $P$ is smooth in some open neighbourhood of $P$ (but not necessarily in $P$).
\end{abstract}

\maketitle

\section{Introduction}
One of the main properties of geodesics of  Riemannian (or more generally Finsler) manifolds is that they locally minimize the distance associated with the metric (see \eqref{dist}). To be more precise, if $(M,F)$ is a Finsler manifold and $\gamma:[a,b]\rightarrow M$ is a geodesic of $(M,F)$, then there exists $\varepsilon\in \R$, with $b-a\geq \varepsilon>0$, such that $\gamma |_{[a,a+\varepsilon]}$ minimizes the distance from $\gamma(a)$ to  $\gamma(a+\varepsilon)$. It is well-known that given a submanifold $P\subset M$ in a Riemannian manifold $(M,g)$, orthogonal geodesics to $P$ also minimize the distance. In order to prove this, the standard procedure is to consider a tubular neighbourhood $U$ of $P$ and to proceed as follows: let  $\bar B(x,r)$ be the closed ball of center $x\in M$ and radius $r>0$ for the distance associated with the Riemannian metric. Then given a geodesic $\gamma:[a,b]\rightarrow M$ such that $\gamma(a)\in P$ and $\dot\gamma(a)$ is orthogonal to $P$, if we choose an $\varepsilon>0$ such that $\gamma([a,a+\varepsilon])\subset U$ and $\bar B(\gamma(a+\varepsilon),\varepsilon)\subset U$,  it follows straightforward that $\gamma|_{[a,a+\varepsilon]}$ minimizes the distance from $P$ to $\gamma(a+\varepsilon)$. As  tubular neighbourhoods always exist in Riemannian Geometry and it is always possible to find an $\varepsilon>0$ with that property, this shows that orthogonal geodesics to submanifolds are always locally minimizing. The main difficulty to generalize this result to Finsler manifolds is that the existence of tubular neighbourhoods does not follow from the classical results for vector bundles as in \cite[IV,\S5]{Lang}. This is because the subset of orthogonal vectors to a submanifold is not in general a vector bundle. In order to overcome this problem we will use the following strategy:
\subsection{Sketch of the proof of Theorem \ref{TeoremaPrincipal}} Observe first that when we consider an (orientable) hypersuface $\tilde P$ in a Finsler manifold $(M,F)$, the orthogonal  vectors determine two vector bundles (one on each side of the hypersurface) and we can apply the classical result about the existence of tubular neighbourhoods. Given an arbitrary submanifold $P$ and a geodesic $\gamma$ orthogonal to $P$, we will find a hypersuface $\tilde P$ that contains $P$ and it is also orthogonal to $\gamma$. Then using the tubular neighbourhood, we can prove that $\gamma$ minimizes the distance to $\tilde P$ in some interval. As $P$ is contained in $\tilde P$, $\gamma$ also minimizes the distance from $P$ in the same interval. In order to obtain the other two properties announced in the abstract, namely, the existence of tubular neighbourhoods and the smoothness of the distance from the submanifold, we will need to accomplish this process with continuous dependence on (the initial vector of) the geodesic and the length of the  minimizing interval. 

\subsection{Existence of geometric tubular neighbourhoods (Theorem \ref{tubneigh})}  Once we have proved that orthogonal geodesics minimize in some interval, and that this interval can be fixed in geodesics with close initial unit velocity (Theorem \ref{TeoremaPrincipal}), it follows (using Lemma \ref{lemma3}) that the exponential is injective. In order to show that is a local diffeomorphism, we need additionally to ensure that there are no focal points for some common interval to all unit orthogonal geodesics (Proposition \ref{focalpoints}). This result together with the injectivity of the exponential map allows us to obtain the existence of geometric tubular neighbourhoods.  

Finally, let us notice the importance of tubular neighbourhoods in Finsler manifolds. They have been considered in \cite[\S 16.2]{Shen01} and \cite{Wu} to compute the volume under some additional hypotheses and they are also fundamental in the study of singular Finsler foliations \cite{AAJ17}. 

\section{Preliminaries}
Our notion of Finsler metric coincides with the one given in the classical book \cite{BaChSh00}, namely, given a manifold $M$ of dimension $n$, a Finsler metric is a non-negative function on the tangent bundle $F:TM\rightarrow [0,+\infty)$ such that $F$ is smooth in $TM\setminus \bf 0$, positive homogeneous of degree 1, that is, $F(\lambda v)=\lambda F(v)$ for every $\lambda >0$ and $v\in TM$, and for every $v\in TM\setminus\bf 0$, the fundamental tensor of $F^2$, defined as 
\begin{equation}\label{fundtensor}
 g_{v}(u,w)=\frac{1}{2}\frac{\partial^{2}}{\partial t\partial  s} F^{2}(v+tu+sw)|_{t=s=0}
 \end{equation}
for any $u,w\in T_{\pi(v)}M$,  is  positive-definite, where $\pi:TM\rightarrow M$ is the natural projection. With the Finsler metric at hand, we can define the length of any (piecewise) smooth curve  $\gamma:[a,b]\rightarrow M$ as $\ell_F(\gamma)=\int^b_a F(\dot\gamma) ds$ and the distance between any pair of points $p,q\in M$ as
\begin{equation}\label{dist}
 d_F(p,q)=\inf_{\gamma\in C_{p,q}} \ell_F(\gamma),
 \end{equation}
where $C_{p,q}$ is the space of (piecewise) smooth curves $\gamma:[a,b]\rightarrow M$ from $p$ to $q$. Observe that this distance is not necessarily symmetric in general, namely, $d_F(q,p)\not=d_F(p,q)$  as it is allowed $F(-v)\not=F(v)$ where $v\in TM$. As a consequence for every $p\in M$ and $r>0$, one has to handle to types of balls, {\it forward balls}, $B^+_F(p,r)=\{q\in M: d_F(p,q)<r\}$, and {\it backward balls}, $B^-_F(p,r)=\{q\in M: d_F(q,p)<r\}$. As the distance associated with a Finsler metric is continuous, the respective closures of the forward and backward balls are given by $\bar B^+_F(p,r)=\{q\in M: d_F(p,q)\leq r\}$ and $\bar B^-_F(p,r)=\{q\in M: d_F(q,p)\leq r\}$.
It is well-known that if a curve $\gamma:[a,b]\rightarrow M$ is minimizing, namely, $\ell_F(\gamma)=d_F(\gamma(a),\gamma(b))$, then $\gamma$ is a geodesic. Recall that geodesics can be defined as the critical points of the energy function 
\[E_F(\gamma)=\int^b_a F(\dot\gamma)^2 ds\]
when restricted to $C_{p,q}$. Here we consider geodesics with affine parameter, namely, they can be affinely parametrized to an arclength curve. They can also be defined with some of the connections at hand in Finsler Geometry: Chern, Cartan, Bewald, Hashiguchi... (see \cite[page 39]{BaChSh00}). We will use here the Chern connection viewed as a family of affine connections \cite{Jav14}. It turns out that for every $p\in M$ and $v\in T_pM$, there exists a unique (smooth) geodesic $\gamma:[0,b)\rightarrow M$ such that $\gamma(0)=p$ and $\dot\gamma(0)=v$, being inextendible to the right.
On the other hand, not every geodesic $\gamma:[a,b]\rightarrow M$ is minimizing, but it is locally minimizing, which means that there exists $\varepsilon$, with $0<\varepsilon\leq b-a$, such that $\gamma|_{[a,a+\varepsilon]}$ is minimizing. Given any subset $S$ and a point $q\in M\setminus S$, we can define the distance $d_F(S,q)=\inf_{p\in S}d_F(p,q)$. We will focus here in the case in that $S$ is a submanifold of $M$ which we will denote by $P$. Observe that if a curve $\gamma:[a,b]\rightarrow M$ is minimizing from $P$ to $q$, namely, $\ell_F(\gamma)=d_F(P,q)$, then it is a geodesic and $g_{\dot\gamma(a)}(\dot\gamma(a),w)=0$ for every $w\in T_{\gamma(a)}P$ (see for example \cite[Remark 2.1 (ii)]{CJM11}). This is a motivation for the concept of orthogonal vectors to a submanifold $P$. We will say that a vector $v\in TM\setminus \bf 0$ is {\it orthogonal to $P$} if $\pi(v)\in P$ and $g_v(v,w)=0$ for every $w\in T_{\pi(v)}P$. We will denote by $\nu(P)$ the subset of orthogonal vectors to $P$, which is a cone in the sense that if $v\in \nu(P)$, then $\lambda v\in \nu(P)$ for every $\lambda>0$. Observe that $\nu(P)$ is not a vector bundle over $P$ as in classical Riemannian Geometry, but $\nu(P)$ is a smooth submanifold of $TM$ of dimension $n=\dim M$,  and the restriction $\pi:\nu(P)\rightarrow P$ is a submersion \cite[Lemma 3.3]{JavSoa15}. Observe also that even though the {\it orthogonal cone} $\nu(P)$ is not a vector bundle over $P$, it is related to a vector bundle over $P$ by means of the Legendre transform $\mathcal L: TM\setminus {\bf 0}\rightarrow TM^*\setminus {\bf 0}$, which is defined as ${\mathcal L}(v)=g_v(v,\cdot)$ for every $v\in TM\setminus \bf 0$. Indeed, $\nu(P)$ is the inverse image by $\mathcal L$ of the annihilator of the tangent bundle of $P$ (as a subset of $TM$). For a related result see \cite[Prop. 2.4]{AlvDur01}. 
Furthermore, we will denote by $\nu^1(P)$ the subset of unit orthogonal vectors to $P$,  which is a smooth submanifold of dimension $n-1$ as it is the intersection of two transversal submanifolds: the unit bundle and $\nu(P)$. 

A further concept which will be essential in our results is the exponential map associated with a Finsler metric. The {\it exponential map} $\exp:\mathcal U\subset TM\rightarrow M$ is defined as follows. Given $v\in TM\setminus \bf 0$, let us denote by $\gamma_v:[0,b)\rightarrow M$ the  geodesic (inextendible to the right) with $\gamma_v(0)=\pi(v)$ and $\dot\gamma_v(0)=v$. Then  $v\in \mathcal U$ (the domain of the exponential map)  if $b>1$ and in this case $\exp(v)=\gamma_v(1)$. Moreover, by the smooth dependence of geodesics on the Cauchy initial conditions, one can prove that is $C^1$ in its domain and $C^\infty$ away from the zero section (see for example \cite[\S 5.3]{BaChSh00}).

We are now ready to give one of the most fundamental notions of this note.
\begin{definition}
We say that a neighbourhood $V$ of $P$ is a \emph{tubular neighbourhood} of $P$ if there exists an open neighbourhood $\mathcal V\subset \nu(P)$ such that $\exp:{\mathcal V}\rightarrow V\setminus P$ is a diffeomorphism.
\end{definition}
To guarantee the existence of tubular neighbourhoods allows us to prove the local minimization of orthogonal geodesics. Nevertheless, we will prove first the local minimization of orthogonal geodesics due to problems related with the non-smoothness of (the extension of) $\nu(P)$ in the zero section, namely, as said before, it is a cone but not a vector bundle. In some cases, a more specific notion of tubular neighbourhood is needed in such a way that all the orthogonal geodesics are minimizing. Let us introduce the notion of {\it cut value of $P$}, which will play a relevant role in the following.
Given $v\in \nu^1(P)$, the cut value of $v$ is defined by
$$ i(v)=i_P(v):=\sup\{t; d_F(P,\gamma_v(t))=\ell_F(\gamma_v|_{[0,t]})=t \}. $$
If $i(v)<+\infty$, the point $\gamma_v(i(v))$ is said to be a \emph{cut point} of $P$. And the set of all cut points of $P$ is called the \emph{cut locus} of $P$ and it will be denoted by $Cut(P).$

\begin{lemma}\label{lemma3} If $v,w \in \nu^1(L)$ and  $t_0,t_1< \min\{ i(v),i(w)\}$ such that $\gamma_v(t_0)=\gamma_w(t_1)$, then $v=w$.
\end{lemma}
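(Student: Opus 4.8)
The plan is to reduce the statement to the classical fact, recalled in the preliminaries, that a curve realizing the distance must be a smooth geodesic, combined with the uniqueness of geodesics determined by an initial point and velocity. Write $q:=\gamma_v(t_0)=\gamma_w(t_1)$.

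First I would record that the set defining the cut value is downward closed: if $d_F(L,\gamma_v(t'))=t'$ and $0\le t''<t'$, then $d_F(L,\gamma_v(t''))=t''$, since otherwise a strictly shorter curve from $L$ to $\gamma_v(t'')$ concatenated with $\gamma_v|_{[t'',t']}$ would produce a curve from $L$ to $\gamma_v(t')$ of length strictly less than $t'$, a contradiction. Hence $t_0<i(v)$ and $t_1<i(w)$ yield $d_F(L,q)=t_0$ and $d_F(L,q)=t_1$, so that $t_0=t_1=:t$.

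Next, using $t<i(v)$ I choose $\delta>0$ with $t+\delta<i(v)$, so that $\gamma_v$ is defined at $t+\delta$ and $d_F(L,\gamma_v(t+\delta))=t+\delta$. Consider the broken curve $\sigma$ obtained by following $\gamma_w|_{[0,t]}$ from $\gamma_w(0)\in L$ to $q$ and then $\gamma_v|_{[t,t+\delta]}$ from $q$ to $\gamma_v(t+\delta)$. Its length equals $t+\delta$, and since $\gamma_w(0)\in L$ we get $t+\delta=d_F(L,\gamma_v(t+\delta))\le d_F(\gamma_w(0),\gamma_v(t+\delta))\le\ell_F(\sigma)=t+\delta$, forcing all these inequalities to be equalities. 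In particular $\sigma$ realizes the point-to-point distance $d_F(\gamma_w(0),\gamma_v(t+\delta))$, hence is a minimizing curve between its endpoints.

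Then, invoking the fact that every minimizing curve is a geodesic (and geodesics with affine parameter are smooth), the curve $\sigma$ cannot have a corner at $q$: its one-sided velocities must agree, $\dot\gamma_w(t)=\dot\gamma_v(t)$. By uniqueness of the geodesic through $q$ with this prescribed velocity at the parameter $t$, the curves $\gamma_v$ and $\gamma_w$ coincide as affinely parametrized geodesics; evaluating the derivative at $0$ gives $v=\dot\gamma_v(0)=\dot\gamma_w(0)=w$. The step I expect to be the main (if mild) obstacle is this passage to the no-corner conclusion: one must set up the concatenation so that its length equals the genuine point-to-point distance $d_F(\gamma_w(0),\gamma_v(t+\delta))$ and not merely the distance from $L$, so that the ``minimizing $\Rightarrow$ geodesic'' result applies and smoothness at $q$ excludes distinct incoming and outgoing directions. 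The asymmetry of $d_F$ only requires keeping the orientation of all curves fixed, from $L$ outward, and plays no further role.
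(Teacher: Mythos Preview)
Your proof is correct and follows essentially the same argument as the paper: first conclude $t_0=t_1$ from the minimization property, then concatenate $\gamma_w|_{[0,t_0]}$ with $\gamma_v|_{[t_0,t_2]}$ for some $t_2\in(t_0,i(v))$, observe the concatenation realizes the point-to-point distance, and use that minimizers are smooth geodesics to rule out a break. Your version is simply more explicit about the downward-closedness of the minimizing interval and about the chain of inequalities yielding $d_F(\gamma_w(0),\gamma_v(t+\delta))=\ell_F(\sigma)$, both of which the paper leaves implicit.
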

\begin{proof}
First observe that $t_0=t_1$, otherwise one of the geodesics $\gamma_v$ and $\gamma_w$ is not minimizing in $t_0$ or $t_1$, but this is a contradiction with the hypothesis  $t_0,t_1< \min\{ i(v),i(w)\}$. Take  $t_2\in (t_0,i(v))$. For definition  $d_F(P,\gamma_v(t_2))=t_2 $. Let $\alpha$ be the concatenation of $\gamma_w|_{[0,t_0]} $ with $\gamma_v|_{[t_0,t_2]}$. The length of $\alpha$ is $\ell_F(\alpha)=t_2.$
In particular, the length of $\alpha$ is equal to the distance from $\alpha(0)$ to $\alpha(t_2)$. Then $\alpha$ is a geodesic and there is no break. This implies that $\gamma_v=\gamma_w$ and in particular $v=w$.
\end{proof}





\section{The Existence}

\begin{theorem}\label{TeoremaPrincipal}
Let $P$ be a k-dimensional embedded submanifold in a Finsler manifold $(M^n,F)$. Given $v_0\in \nu^1(P)$, there exists a neighbourhood of $v_0$ in ${\mathcal U}\subset\nu^1(P)$ and $\varepsilon>0$ such that $i(v)>\varepsilon$ for every $v\in \mathcal U$.

\end{theorem}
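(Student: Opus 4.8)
The plan is to reduce the problem to the hypersurface case, where the classical tubular neighbourhood theorem for vector bundles \cite[IV,\S5]{Lang} is available, and then to obtain the uniform $\varepsilon$ by a continuity-plus-compactness argument. First I would construct, for each $v$ in a neighbourhood of $v_0$ in $\nu^1(P)$, a hypersurface $\tilde P_v$ containing $P$ near $p_0:=\pi(v_0)$ and orthogonal to $v$ at $\pi(v)$. The key point making this consistent is that orthogonality to a submanifold is a condition at a single base point: since $v\in\nu(P)$ we have $T_{\pi(v)}P\subseteq \ker\mathcal L(v)$, so prescribing $T_{\pi(v)}\tilde P_v=\ker\mathcal L(v)$ (the $g_v$-orthogonal complement of $v$) is compatible with the requirement $\tilde P_v\supseteq P$. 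In an adapted chart that straightens $P$ to $\R^k\times\{0\}$, one realizes $\tilde P_v$ as a graph that vanishes on $P$ and whose differential at $\pi(v)$ is adjusted so that its tangent hyperplane is $\ker\mathcal L(v)$; since $\mathcal L$ and $g$ depend smoothly on $v$, this family can be arranged to depend smoothly on $v$ and to stay close to $\tilde P_{v_0}$ for $v$ close to $v_0$.

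Next I would invoke, for each fixed $v$, the existence of a tubular neighbourhood of the hypersurface $\tilde P_v$. For a hypersurface the orthogonal cone $\nu(\tilde P_v)$ restricts on each side to a line bundle, so the obstruction mentioned in the introduction disappears and the classical result applies: the normal exponential map of $\tilde P_v$ is a diffeomorphism from a neighbourhood of the zero section onto a tubular neighbourhood $U_v$. Inside $U_v$ I would run the forward-ball argument sketched in the introduction, now in the Finsler setting: choosing $\varepsilon_v>0$ with $\gamma_v([0,\varepsilon_v])\subset U_v$ and $\bar B^+_F(\gamma_v(\varepsilon_v),\varepsilon_v)\subset U_v$, one concludes that $\gamma_v|_{[0,\varepsilon_v]}$ minimizes $d_F(\tilde P_v,\cdot)$. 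Since $P\subseteq \tilde P_v$ and $\gamma_v(0)\in P$, the chain $\ell_F(\gamma_v|_{[0,t]})=d_F(\tilde P_v,\gamma_v(t))\le d_F(P,\gamma_v(t))\le \ell_F(\gamma_v|_{[0,t]})$ forces all terms to coincide, so $\gamma_v$ also minimizes the distance from $P$ up to time $\varepsilon_v$; that is, $i_P(v)\ge \varepsilon_v$.

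Finally, to upgrade this pointwise bound to a uniform one I would restrict $v$ to a compact neighbourhood $\bar{\mathcal U}$ of $v_0$ and argue that $\varepsilon_v$ can be bounded below by a single $\varepsilon>0$. Here I would use that the geodesic $\gamma_v$, the hypersurface $\tilde P_v$, its tubular neighbourhood, and the distance $d_F$ all depend continuously on $v$, so the two openness conditions defining an admissible $\varepsilon_v$ (the geodesic segment lying in the tube and the forward ball lying in the tube) persist, after shrinking $\varepsilon$ slightly, on a whole neighbourhood of $v_0$. Taking $\mathcal U$ to be the interior of $\bar{\mathcal U}$ then yields $i(v)>\varepsilon$ for every $v\in\mathcal U$.

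I expect the main obstacle to be exactly this last uniformity step: making the dependence of the tubular neighbourhoods $U_v$ and of the minimizing interval $\varepsilon_v$ on $v$ precise enough to extract a positive lower bound, rather than merely a pointwise positive $\varepsilon_v$. A secondary technical point is to verify that the family $\tilde P_v$ can indeed be chosen smoothly in $v$ while simultaneously containing $P$ and carrying $v$ as an orthogonal vector; once this family and a common tube are in hand, the compactness of $\bar{\mathcal U}$ should close the argument.
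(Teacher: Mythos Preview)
Your strategy is essentially the paper's: build a smooth family of hypersurfaces $\tilde P_v\supseteq P$ orthogonal to $v$, exploit that for a hypersurface the normal cone splits into two line bundles so the classical tube exists, and deduce that $\gamma_v$ minimizes from $\tilde P_v$ and hence from $P$. The obstacle you flag---uniformity of $\varepsilon_v$---is precisely what the paper addresses, but not by a separate compactness step on tubes $U_v$: instead it packages the normal exponentials of \emph{all} the $\tilde P_v$ into a single smooth map
\[
E^\pm:(s,x,v)\longmapsto\bigl(v,\gamma_{N_\pm(x,v)}(s)\bigr)
\]
on $\R\times\R^{n-1}\times\tilde{\mathcal U}$ (with $N_\pm(x,v)$ the unit normals to $\tilde P_v$ produced via Gram--Schmidt and the Legendre transform) and applies the Inverse Function Theorem \emph{once} at $(0,x_0,v_0)$. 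The resulting product image $\mathcal U'_\pm\times U'_\pm$ furnishes a common tube $U'_+\cap U'_-$ for every nearby $v$ simultaneously, and a single $\varepsilon$ is then read off from one convex neighbourhood inside it; this is the clean replacement for your ``tubes depend continuously on $v$'' heuristic. Two Finsler-specific corrections to your sketch: the relevant ball that must lie in the tube is the \emph{backward} ball $\bar B^-_F(\gamma_v(t),\varepsilon)$ (competing start points $p'\in P$ satisfy $d_F(p',\gamma_v(t))\le\varepsilon$), not $\bar B^+_F$; and the minimization step uses a convex neighbourhood so that a minimizing geodesic from the nearest $q\in\tilde P_v$ to $\gamma_v(t)$ is guaranteed to exist before the diffeomorphism $E^+$ forces it to coincide with $\gamma_v$.
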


\begin{proof}

%

First, we will show that for every $v\in \nu^1(P)$ close to $v_0$ there exists a hypersurface $\tilde P_v$ such that $P\subset\tilde P_v$ and $v$ is orthogonal to $\tilde P_v$. Additionally, we will see that the unit orthogonal vector fields that extend $v$ have a smooth dependence (see the map $N_+$ in \eqref{vectorN}). Being $\tilde P_v$ a hypersurface, the orthogonal vectors determine two vector bundles (one in each orientation of $\tilde P_v$) allowing to follow the classical approach. This will lead us to prove that the exponential map restricted to   orthogonal vectors to $\tilde P_v$ (see \eqref{expv}) is a diffeomorphism in a neighbourhood of the zero section. Finally, with the help of a small enough convex neighbourhood, we will prove that $\gamma_v$ minimizes in  a certain interval $[0,\varepsilon]$ for all $v$ in a neighbourhood of $v_0$.

{\it Step 1: to find the hypersurfaces $\tilde P_v$ and to prove the smoothness of their orthogonal unit vectors.}  Let $\varphi:\R^{n}\to W\subset M$ be a parametrization of $M$ around $p_0=\pi(v_0)$ such that $\varphi(\R ^k\times 0)=P\cap W$. Consider a basis of $\R^n$ of the form $\{d\varphi^{-1}(v_0), e_2,\ldots, e_n\}$ and some open neighbourhood  $\tilde{\mathcal U}\subset \nu^1(P)$ of $v_0$ such that $B_v=\{d\varphi^{-1}(v), e_2,\ldots, e_n\}$ is a basis for every $v\in \tilde{\mathcal U}$. With the  positive definite scalar product $h_v=\varphi^*(g_v)$, apply Gram-Schmidt to the basis $B_v$ in order to obtain a basis  in $\R^n$, $\{E_1(v)=d\varphi^{-1}(v),E_2(v),\ldots,E_n(v)\}$, in such a way that the last $n-1$ vectors  are $h_v$-orthogonal to $d\varphi^{-1}(v)$ and they depend smoothly on $v$. Now define the map $\psi: \R^{n}\times \tilde{\mathcal U}\rightarrow  \R^n$, given by $\psi(x,v)=\sum_{i=1}^{ n}x_iE_{ i}(v)$ where $x=(x_1,\ldots,x_{n})\in \R^{n}$ and $v\in\tilde{\mathcal U}$. Observe that $\tilde P_v=\varphi\circ\psi( \{0\}\times  \R^{n-1}\times v)$ is a smooth hypersurface of $M$ which is orthogonal to $v$. We want to prove that the orthogonal unit vectors to the hypersurfaces $\tilde P_v$ (which extend $v$) depend smoothly on $v$.  We will use that the Legendre transformation maps the orthogonal vectors to $\tilde P_v$ to the annihilator of $T\tilde P_v$, which is a vector bundle.  Define $Y_{\pm}:\R^{n-1}\times\tilde{\mathcal U}\rightarrow TM$  as $Y_{\pm}(x,v)={\mathcal L}^{-1}(\phi^v_*({dx_1}_{(0,x)}))$,
where, for every $v\in \tilde{\mathcal U}$, $\phi^v:\R^n\rightarrow W$ is defined as $\phi^v(x)=\varphi(\psi(x,v))$,  ${dx_1}_{(0,x)}$ is the covector associated with the first coordinate of $\R^n$ in the tangent space to $(0,x)\in \R^n$,  and ${\mathcal L}:TM\setminus 0\rightarrow TM^*\setminus 0$ is the Legendre transformation associated with $F$. 
  Finally, we get two smooth maps
$N_\pm:\R^{n-1}\times\tilde{\mathcal U}\rightarrow TM$  given by 
\begin{equation}\label{vectorN}
 N_\pm(x,v)=\frac{1}{F(Y_\pm(x,v))} Y_\pm(x,v),
 \end{equation}
in such a way that for every $v\in \tilde{\mathcal U}$,  $N_\pm(x,v)\in \nu^1(\tilde P_v)$ and $v=N_+(x(v),v)$ for some $x(v)\in \R^{n-1}$.

{\it Step 2: to prove that the exponential is a diffeomorphism in an open subset of the normal bundle to $\tilde P_v$.} Define  the map $E^\pm: \tilde{W}_\pm\subset \R\times \mathbb{R}^{n-1}\times \tilde{\mathcal U} \to \tilde{\mathcal U}\times M$ as
\begin{equation}\label{expv}
 E^\pm(s,x,v)= (v,\gamma_{N_\pm(x,v)}(s)),
 \end{equation} 
where $\gamma_{N_\pm(x,v)}$ is the geodesic with initial velocity in $s=0$ equal to $N_\pm(x,v)$ and $\tilde{W}_\pm$ is an open subset such that $\gamma_{N_\pm(x,v)}(s)$ is well-defined for each $(s,x,v)\in \tilde{W}_\pm$.  Observe that  $dE^\pm_{(0,x_0,v_0)}$ is a linear isomorphism, where $x_0$ is determined by $\varphi(\psi(x_0,v_0))=p_0$.  Then  by the Inverse function Theorem, there exists an open subset $ {\mathcal W}_\pm\subset \tilde{W}_\pm$ such that $E^\pm|_{{\mathcal W}_\pm}:  {\mathcal W}_\pm\rightarrow  E^\pm({\mathcal W}_\pm)$ is a diffeomorphism. Moreover, we can assume that $E^\pm(\mathcal W_\pm)={\mathcal U'}_\pm\times U'_\pm$   with $U'_+\cap U'_-$ connected. 

 {\it Step 3: to prove the minimization of $\gamma_v$ to $\tilde P_v$ in an interval $[0,\varepsilon]$. } Let $\mathcal V$ be a convex neighborhood of $p_0$ with compact closure contained in $U'_+\cap U'_-$ which does not intersect the boundary of $W$, and choose $C>0$ such that $\tilde F(v)=F(-v)\leq C F(v)$ for every $v\in TM$ with $\pi(v)\in \mathcal V$.  Moreover, choose $\varepsilon>0$ small enough in such a way that $\bar B_{F}^-(p_0,(2+C)\varepsilon)\subset {\mathcal V}$  and a neighbourhood $\mathcal U\subset {\mathcal U'}_+\cap {\mathcal U'}_-$ of $v_0$ such that $\pi({\mathcal U})\subset  B_F^-(p_0,\varepsilon)\subset {\mathcal V}$. Then
 for every $v\in \mathcal U$ and  $t\in [0,\varepsilon]$, 
\begin{enumerate}
\item $\bar B_F^-(\gamma_{v}(t),\varepsilon)\subset{\mathcal V}$ (it follows from $\pi({\mathcal U})\subset  B_F^-(p_0,\varepsilon)$ and $\bar B_{F}^-(p_0,(2+C)\varepsilon)\subset \mathcal V$)  and as a consequence, $\bar B_F^-(\gamma_{v}(t),\varepsilon)$ is compact,
\item $\bar B_F^-(\gamma_{v}(t),\varepsilon)\cap (P_v\setminus U'_+)=\emptyset$ (it follows from $(1)$ and $\mathcal V\subset U'_+\cap U'_-$).
\end{enumerate}
 Moreover,  taking $\varepsilon$ smaller if necessary, we can also assume that
\begin{enumerate}
\item[(3)] all the orthogonal geodesics to $\tilde P_v$ of length at most $\varepsilon$ from the opposite side of $\tilde  P_v$ to $v$ do not intersect $\gamma_{v}:[0,\varepsilon]\rightarrow M$,
\end{enumerate}
this is because $U'_+\cap U'_-\setminus \tilde P_v$ has two connected components and the orthogonal geodesics from the opposite side are in a different component than $\gamma_v|_{[0,\varepsilon]}$ (recall that the restriction of $E^-(\cdot,\cdot,v)$ to a certain domain is a diffeomorphsim with  image $U'_+\cap U'_-$ for every $v\in\mathcal U$). 
Consider any $v\in \mathcal U$ and observe that by the continuity of $d_F$,  for every $t\in [0,\varepsilon]$, the distance $d_F(\tilde P_v,\gamma_{v}(t))$ is attained at some point $q\in \tilde P_v\cap \bar B_F^-(\gamma_{v}(t),\varepsilon)$. Indeed, if $p=\gamma_v(0)$, as $d_F(p,\gamma_{v}(t))\leq \varepsilon$ and  $\tilde P_v\cap \bar B_F^-(\gamma_{v}(t),\varepsilon)$ is compact (recall that $\bar B_F^-(\gamma_{v}(t),\varepsilon)\subset \mathcal V $ and $\mathcal V$ does not intersect the boundary of $\tilde P_v$, which is contained in the boundary of $W$), Weierstrass Theorem ensures the existence of $q$. Assume now that $p\not= q$. Then as $q,\gamma_{v}(t)\in \mathcal V$, there exists a minimizing geodesic $\tilde\gamma$ from $q$ to $\gamma_{v}(t)$ with length equal to $d_F(\tilde P_v,p)$. By \cite{CJM11}, $\tilde\gamma$ has to be orthogonal to $\tilde P_v$, but taking into account the condition $(3)$ above, this is a contradiction with the fact that the map $E$ is a diffeomorphism when the image is restricted to $v\times U'$. Finally,  condition $(2)$ above guarantees that $\gamma_v:[0,\varepsilon]\rightarrow M$ also minimizes the distance from $P$ to $\gamma_v(t)$, taking into account that $P\subset \tilde P_v$.
\end{proof}

\begin{corollary}\label{precompactQ} Let $P$ be an embedded submanifold of a Finsler manifold $(M,F)$.  If $Q$ is a pre-compact open set of $P$, then there exists $\varepsilon>0$ such that $i_P(v)>\varepsilon$ for all $v\in \nu^1(Q).$ 
\end{corollary}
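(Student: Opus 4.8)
The plan is to deduce this from Theorem \ref{TeoremaPrincipal} by a compactness argument, the only real content being to verify that the relevant set of unit orthogonal vectors is compact. Write $\bar Q$ for the closure of $Q$ in $P$; since $Q$ is pre-compact, $\bar Q$ is compact, and because $P$ is embedded, $\bar Q\subset P$, so that $\nu^1(\bar Q)\subset\nu^1(P)$ and Theorem \ref{TeoremaPrincipal} applies at each of its points.

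First I would establish that $\nu^1(\bar Q)$ is compact. The key facts are: (a) for each $p\in M$ the indicatrix $\{v\in T_pM:F(v)=1\}$ is compact, a consequence of the positivity of $F$ on $TM\setminus\mathbf{0}$ together with the positive homogeneity of degree $1$; hence the projection $\pi\colon\{F=1\}\to M$ from the unit bundle is proper, and since $\bar Q$ is compact in $M$ (being compact in $P$ and $P$ embedded), the set $\pi^{-1}(\bar Q)\cap\{F=1\}$ is compact. (b) The orthogonality condition $g_v(v,w)=0$ for all $w\in T_{\pi(v)}P$ is a closed condition on this set, because the fundamental tensor is continuous in $v$ and the tangent spaces $T_pP$ vary continuously along $P$. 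As $\nu^1(\bar Q)$ is exactly the subset of $\pi^{-1}(\bar Q)\cap\{F=1\}$ cut out by this condition, it is a closed subset of a compact set and therefore compact.

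Next I would run the standard open-cover extraction. For each $v\in\nu^1(\bar Q)\subset\nu^1(P)$, Theorem \ref{TeoremaPrincipal} yields an open neighbourhood $\mathcal U_v\subset\nu^1(P)$ of $v$ and a number $\varepsilon_v>0$ such that $i_P(w)>\varepsilon_v$ for every $w\in\mathcal U_v$. The family $\{\mathcal U_v\}_{v\in\nu^1(\bar Q)}$ is an open cover of the compact set $\nu^1(\bar Q)$, so there is a finite subcover $\mathcal U_{v_1},\dots,\mathcal U_{v_m}$. Putting $\varepsilon=\min\{\varepsilon_{v_1},\dots,\varepsilon_{v_m}\}>0$, any $v\in\nu^1(Q)\subset\nu^1(\bar Q)$ lies in some $\mathcal U_{v_j}$, whence $i_P(v)>\varepsilon_{v_j}\geq\varepsilon$, which is precisely the assertion.

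The only delicate step is the compactness of $\nu^1(\bar Q)$; once that is secured, the rest is routine. I expect the main obstacle to be justifying cleanly that the fibrewise compactness of the indicatrices passes to compactness of the unit orthogonal cone over the compact base $\bar Q$, i.e.\ the properness of the unit bundle projection and the closedness of the orthogonality constraint. It is worth remarking that this is the place where the non-bundle nature of $\nu(P)$ is harmless: we work with the unit-level set $\nu^1(\bar Q)$, which is compact regardless of the failure of $\nu(P)$ to be a vector bundle.
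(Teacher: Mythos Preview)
Your proof is correct and follows essentially the same strategy as the paper: compactness of $\nu^1(\bar Q)$ combined with Theorem~\ref{TeoremaPrincipal}. The paper phrases it as a sequential contradiction (a sequence $v_n\in\nu^1(Q)$ with $i_P(v_n)\to 0$ would have a convergent subsequence in $\nu^1(\bar Q)$, contradicting the theorem at the limit), whereas you use the equivalent finite-subcover formulation; your version has the merit of spelling out why $\nu^1(\bar Q)$ is compact, which the paper simply asserts.
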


\begin{proof} Assume that there is a sequence $v_n\in \nu^1(Q)$ such that $\lim_{n\to \infty}i_P(v_n)=0.$ By compactness of $\{v\in \nu^1(P); \pi(v)\in \overline{Q}\}$, we can take a convergent subsequence of $v_n$, say $\lim v_n=v_0\in \nu^1(P)$. This is a contradiction, because by the previous theorem, there exists   an open neighbourhood $\mathcal U$ of $v_0$ in $\nu^1(P)$ such that $i_{P}(v)>\varepsilon$ for some $\varepsilon>0$  and for all $v\in \mathcal U$. 
 \end{proof}
  Let us recall that roughly speaking, an orthogonal geodesic to $P$, $\gamma:[0,1]\rightarrow M$, has a $P$-focal instant $t_0\in(0,1]$ if it admits a variation by   geodesics that depart orthogonally from $P$   and fix $\gamma(t_0)$ up to first order, or if there exists a $P$-Jacobi field $J$ such that $J(t_0)=0$. For precise definitions see \cite[\S 3,4]{JavSoa15}. 
 \begin{proposition}\label{localdifeo}
Let $(M,F)$ be a Finsler manifold and $P$ an embedded submanifold of $M$. If ${\mathcal V}\subset \nu(P)$ is an open neighbourhood  where the exponential map of $F$ is well-defined and for all $v_0\in {\mathcal V}$ the geodesic $\gamma_{v_0}:[0,1]\rightarrow M$ does not have any $P$-focal point, then the exponential map $\exp: {\mathcal V}\subset \nu(P)\rightarrow M$  is a local diffeomorphism in $v_0$. 
 \end{proposition}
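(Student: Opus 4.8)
The plan is to apply the inverse function theorem, so it suffices to show that the differential $d(\exp)_{v_0}\colon T_{v_0}\nu(P)\to T_{\exp(v_0)}M$ is a linear isomorphism. Since $\nu(P)$ is a smooth submanifold of $TM$ of dimension $n=\dim M$ (as recalled in the Preliminaries) and $M$ also has dimension $n$, the domain and target of $d(\exp)_{v_0}$ have the same dimension; hence it is enough to prove that $d(\exp)_{v_0}$ is injective, i.e. that its kernel is trivial. Note also that $v_0\neq 0$ because $\nu(P)\subset TM\setminus\mathbf{0}$, so, as noted in the Preliminaries, $\exp$ is $C^\infty$ in a neighbourhood of $v_0$ and the differential is taken in the smooth category.

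To analyse the kernel, I would take $\xi\in T_{v_0}\nu(P)$ with $d(\exp)_{v_0}(\xi)=0$ and realize it geometrically. Choose a smooth curve $r\mapsto v(r)$ in $\nu(P)$ with $v(0)=v_0$ and $\dot v(0)=\xi$, and consider the geodesic variation $\Lambda(r,t)=\gamma_{v(r)}(t)=\exp(t\,v(r))$. Because each $v(r)$ is orthogonal to $P$, the longitudinal curves $t\mapsto\Lambda(r,t)$ are geodesics departing orthogonally from $P$, so the variational field $J(t)=\partial_r\Lambda(r,t)|_{r=0}$ along $\gamma_{v_0}$ is precisely a $P$-Jacobi field in the sense of \cite[\S 3,4]{JavSoa15}. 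Evaluating at $t=1$ gives $J(1)=\partial_r\exp(v(r))|_{r=0}=d(\exp)_{v_0}(\xi)=0$.

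The core of the argument is then the no-focal-point hypothesis: since $\gamma_{v_0}\colon[0,1]\to M$ has no $P$-focal instant in $(0,1]$, there is no nontrivial $P$-Jacobi field vanishing at any $t_0\in(0,1]$; in particular the field $J$ above, which vanishes at $t_0=1$, must be identically zero. Finally I would invoke the standard correspondence between $T_{v_0}\nu(P)$ and the space of $P$-Jacobi fields along $\gamma_{v_0}$: the assignment $\xi\mapsto J$ is linear and injective (indeed a linear isomorphism, both spaces having dimension $n$), since the initial data $J(0)=d\pi_{v_0}(\xi)$ together with the covariant derivative of $J$ at $0$ recover the horizontal and vertical parts of $\xi$. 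Thus $J\equiv 0$ forces $\xi=0$, so $\ker d(\exp)_{v_0}=\{0\}$, the differential is an isomorphism, and $\exp$ is a local diffeomorphism at $v_0$.

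The step I expect to be the main obstacle is the precise identification of $\ker d(\exp)_{v_0}$ with the $P$-Jacobi fields vanishing at the endpoint, together with the injectivity of $\xi\mapsto J$: this rests on the Finslerian theory of $P$-Jacobi fields and the first-variation and focal-point formulas, where one must carefully track the boundary terms at $t=0$ coming from the second fundamental form of $P$ (rather than the simpler fixed-endpoint Riemannian picture). Once these facts from \cite{JavSoa15} are in place, the remaining dimension count and the appeal to the inverse function theorem are routine.
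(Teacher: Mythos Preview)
Your proposal is correct and follows essentially the same route as the paper: both argue (in one direction or its contrapositive) that a nonzero element of $\ker d(\exp)_{v_0}$ gives rise, via a curve $v(r)\subset\nu(P)$ and the associated geodesic variation, to a nontrivial $P$-Jacobi field along $\gamma_{v_0}$ vanishing at $t=1$, contradicting the absence of $P$-focal points. The paper's proof is terser and delegates the step you flag as the main obstacle (that $\xi\mapsto J$ is injective and $J$ is a genuine $P$-Jacobi field) to \cite[Proposition~3.15]{JavSoa15}, which is exactly the reference you anticipate needing.
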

 \begin{proof}
 It follows from a straightforward computation. Observe that the exponential map has a singularity in $v\in \nu(P)$ if and only if there exists a smooth curve  $w:(-\epsilon,\epsilon)\rightarrow \nu(P)$ with $w(0)=v$ such that $\frac{\partial }{\partial t} \exp(w(t))|_{t=0}=0$, but in such a case, it is  not difficult to check that $J(s)=\frac{\partial}{\partial t}\gamma_{w(t)}(s)$ is a $P$-Jacobi field along $\gamma_v$ with $J(1)=0$ (see \cite[Proposition 3.15]{JavSoa15}) and then $\gamma_v:[0,1]\rightarrow M$ has a $P$-focal point.
 \end{proof}
 
 \begin{proposition}\label{focalpoints}
Let $P$ be a k-dimensional embedded submanifold in a Finsler manifold $(M^n,F)$. Assume that the exponential map of $F$ is defined in $v_0\in \nu^1(P)$. Then there exists a neighbourhood  ${\mathcal U}\subset\nu^1(P)$ of $v_0$ and $\varepsilon>0$ such that $\gamma_v:[0,\varepsilon]\rightarrow M$ is well-defined and does not have any $P$-focal point for every $v\in{\mathcal U}$.
\end{proposition}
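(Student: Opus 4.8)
The plan is to prove that the absence of $P$-focal points persists under small perturbations of the initial orthogonal unit vector and on slightly shorter intervals. The strategy exploits the open/closed dichotomy of focal instants: if $\gamma_{v_0}:[0,1]\to M$ has no $P$-focal point, I want to first secure a genuinely focal-point-free geodesic of some definite length, and then transfer this to nearby $v$ by a compactness-and-continuity argument. I would begin by invoking Proposition \ref{localdifeo}: since $\gamma_{v_0}$ has no $P$-focal point on $[0,1]$, the exponential map $\exp:\nu(P)\to M$ is a local diffeomorphism at $\lambda v_0$ for every $\lambda\in(0,1]$. The key analytic object is the space of $P$-Jacobi fields along $\gamma_v$; the existence of a $P$-focal instant at $t$ is precisely the degeneracy (vanishing at $t$) of some nontrivial $P$-Jacobi field, equivalently the singularity of $d\exp$ at $t\,v$ in the direction of $\nu(P)$.

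The core of the argument is a continuity statement about the map $(s,v)\mapsto d(\exp)_{s\,v}$ restricted to the tangent directions of $\nu(P)$. Concretely, I would set up the two-parameter family $(s,v)\mapsto \exp(s\,v)$ for $v\in\nu^1(P)$ and $s$ in a neighbourhood of $[0,1]$, and consider its differential as a linear map depending smoothly on $(s,v)$ (using that $\nu^1(P)$ is a smooth submanifold and that $\exp$ is $C^\infty$ away from the zero section, as recalled in the Preliminaries). The condition ``$\gamma_v(s)$ is not $P$-focal'' translates, via Proposition \ref{localdifeo} and \cite[Proposition 3.15]{JavSoa15}, into the nonvanishing of a suitable Jacobian determinant $\Delta(s,v)$ built from a basis of $P$-Jacobi fields. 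Since $\gamma_{v_0}$ has no focal point on the \emph{closed} interval $[0,1]$, and in particular on $(0,1]$, the function $s\mapsto\Delta(s,v_0)$ is nonzero on a compact set (after handling the mild degeneracy at $s=0$, where $\Delta$ vanishes but is the initial condition, not a focal instant). By continuity of $\Delta$ in both arguments and compactness of a closed interval $[\delta,\varepsilon]$ with $\varepsilon\le 1$, I can find a neighbourhood $\mathcal U$ of $v_0$ in $\nu^1(P)$ on which $\Delta(s,v)\neq 0$ for all $s\in(0,\varepsilon]$, which is exactly the statement that $\gamma_v:[0,\varepsilon]\to M$ has no $P$-focal point.

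In more detail, the steps I would carry out in order are: (1) fix the smooth structure and reduce the focal condition to a determinant condition using the characterization of focal points via $P$-Jacobi fields and Proposition \ref{localdifeo}; (2) show that near $s=0$ the geodesic $\gamma_v$ has no $P$-focal point for \emph{every} $v$ in a neighbourhood, using that the first focal instant is bounded away from zero (this is the infinitesimal analogue of Theorem \ref{TeoremaPrincipal}, since focal points cannot occur before the geodesic stops minimizing, so $i(v_0)>0$ already forces the first focal instant to be positive); (3) combine this with the compactness argument on the bounded-away-from-zero portion $[\delta,\varepsilon]$ to obtain a uniform lower bound $\varepsilon$ and a uniform neighbourhood $\mathcal U$; (4) intersect $\mathcal U$ with the open set where $\exp$ is defined up to time $\varepsilon$ (which is open in $\nu^1(P)$ by continuous dependence on initial conditions).

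The main obstacle I anticipate is the behaviour at the initial endpoint $s=0$: the differential of $\exp$ along $\nu(P)$ is automatically singular at the zero section (this is the very non-vector-bundle degeneracy of $\nu(P)$ emphasized repeatedly in the Preliminaries), so the determinant $\Delta$ vanishes at $s=0$ for trivial reasons and this must be carefully distinguished from a genuine focal instant. I would handle this by normalizing the Jacobi-field basis so that $\Delta(s,v)/s^{k}$ (or an analogous rescaling accounting for the codimension) extends continuously and nonzero to $s=0$, or alternatively by bounding the first focal instant below uniformly near $v_0$ via a lower-semicontinuity argument for the first focal time, thereby avoiding the endpoint entirely. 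Once the $s=0$ degeneracy is correctly quotiented out, the rest is a routine application of continuity and compactness.
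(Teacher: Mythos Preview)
Your approach is essentially the one the paper takes, only far more explicit: the paper's proof is two sentences, citing \cite{AJ16} for the fact that $\gamma_{v_0}$ has no $P$-focal point on some initial interval $[0,\varepsilon]$ and then invoking smooth dependence of the Jacobi data on $v$ to propagate this to a neighbourhood $\mathcal U$. Your determinant $\Delta(s,v)$ built from a basis of $P$-Jacobi fields, together with the compactness--continuity argument, is precisely what ``smooth dependence'' amounts to once unpacked, and your treatment of the $s=0$ degeneracy (rescaling by a power of $s$, or bounding the first focal time from below) is the standard way to handle it.

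One clarification is needed, however. Your opening paragraph reads as though ``$\gamma_{v_0}$ has no $P$-focal point on $[0,1]$'' is part of the hypothesis, and you invoke Proposition~\ref{localdifeo} on that basis. It is not: the only hypothesis is that $\exp$ is \emph{defined} at $v_0$, i.e.\ that $\gamma_{v_0}$ exists on $[0,1]$; it may very well have focal points there. You clearly recognize this later (your step~(2) and the final paragraph address it), but the presentation should be reorganized so that the first task is to secure a focal-point-free interval $[0,\varepsilon_0]$ for $\gamma_{v_0}$ itself, and only then run the perturbation argument on a possibly smaller $[0,\varepsilon]$.

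On your step~(2): the route via Theorem~\ref{TeoremaPrincipal} plus ``minimizing geodesics have no interior focal points'' is legitimate (and not circular, since Theorem~\ref{TeoremaPrincipal} is proved earlier without using this proposition), but note that it already gives the full conclusion directly---if $i(v)>\varepsilon$ for all $v\in\mathcal U$, and focal instants cannot precede the cut value, you are done without any determinant or compactness argument. The paper instead keeps the focal-point argument self-contained by appealing to \cite{AJ16}, avoiding reliance on the Morse-index-type fact in the Finsler setting.
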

\begin{proof}
This result can be proved analogously to the proof of the existence of a first focal point in \cite{AJ16} (before Lemma 5.4), which in particular proves that there is an interval $[0,\varepsilon]$ where $\gamma_{v_0}$ has no $P$-focal points. As there is a smooth dependence  on $v$ (the initial velocity of the geodesic $\gamma_v$),  one can ensure the existence of a neighbourhood ${\mathcal U}\subset\nu(P)$ such that $\gamma_v$ does not have $P$-focal points in $[0,\varepsilon]$ as required.
\end{proof}
 The next result ensures the existence of the so-called {\it geometric tubular neighbourhoods}, where the open subset of orthogonal vectors is determined by those with Finsler length less than some $\varepsilon>0$. 
\begin{theorem}\label{tubneigh}
Let $(M,F)$ be a Finsler manifold and $P$ an embedded submanifold of $M$. Then given a precompact open subset $Q$ of $P$, there exists $\varepsilon>0$ such that the exponential map is defined in the open subset
\[{\mathcal V}_\varepsilon=\{ v\in \nu(Q): F(v)<\varepsilon\},\]
$\exp:\mathcal V_\varepsilon\rightarrow V\setminus Q$ is a diffeomorphism for a certain open subset $V_\varepsilon\subset M$ and the geodesic $\gamma_v:[0,1]\rightarrow M$ minimizes the distance from $P$ to $\gamma_v(1)$ for all $v\in {\mathcal V}_\varepsilon$. In particular, ${\mathcal V}_\varepsilon$ is a tubular neighbourhood of $Q$.
\end{theorem}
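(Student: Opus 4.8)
The plan is to assemble the pieces already established: the uniform cut value of Corollary \ref{precompactQ}, the uniform absence of focal points coming from Proposition \ref{focalpoints}, the local-diffeomorphism criterion of Proposition \ref{localdifeo}, and the injectivity of Lemma \ref{lemma3}. First I would fix the scale $\varepsilon$. By Corollary \ref{precompactQ} there is $\varepsilon_1>0$ with $i_P(v)>\varepsilon_1$ for every $v\in\nu^1(Q)$; in particular each geodesic $\gamma_v$ with $v\in\nu^1(Q)$ is defined, and minimizes the distance from $P$, at least on $[0,\varepsilon_1)$. To obtain a common focal-free interval I would upgrade Proposition \ref{focalpoints} from a single $v_0$ to the whole precompact $Q$ by repeating the compactness argument of Corollary \ref{precompactQ}: if no uniform $\varepsilon_2>0$ existed, a sequence $v_n\in\nu^1(Q)$ would carry $P$-focal instants tending to $0$; a convergent subsequence $v_n\to v_0$ inside the compact set $\{v\in\nu^1(P):\pi(v)\in\overline{Q}\}$ would then contradict Proposition \ref{focalpoints} at $v_0$. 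Setting $\varepsilon=\min\{\varepsilon_1,\varepsilon_2\}$, the exponential is defined on the open set $\mathcal V_\varepsilon=\{v\in\nu(Q):F(v)<\varepsilon\}$.

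Next I would establish the three properties of $\exp$ on $\mathcal V_\varepsilon$ by reparametrizing each $v$ as $v=F(v)\,\hat v$ with $\hat v\in\nu^1(Q)$, so that $\exp(v)=\gamma_{\hat v}(F(v))$ and $\gamma_v$ on $[0,1]$ traces $\gamma_{\hat v}$ on $[0,F(v)]\subset[0,\varepsilon)$. Since $F(v)<\varepsilon\le\varepsilon_2$, the geodesic $\gamma_v:[0,1]\to M$ has no $P$-focal instant, so Proposition \ref{localdifeo} makes $\exp$ a local diffeomorphism at every $v\in\mathcal V_\varepsilon$. For injectivity, if $\exp(v)=\exp(w)$ then $\gamma_{\hat v}(F(v))=\gamma_{\hat w}(F(w))$ with $F(v),F(w)<\varepsilon\le\varepsilon_1<\min\{i(\hat v),i(\hat w)\}$, and Lemma \ref{lemma3} yields $\hat v=\hat w$ together with $F(v)=F(w)$, hence $v=w$. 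An injective local diffeomorphism is a diffeomorphism onto its open image, so $\exp:\mathcal V_\varepsilon\to\exp(\mathcal V_\varepsilon)$ is a diffeomorphism onto an open subset of $M$. Minimization is then immediate from $\varepsilon\le\varepsilon_1$: as $F(v)<i(\hat v)$ we get $d_F(P,\exp(v))=F(v)=\ell_F(\gamma_v|_{[0,1]})$.

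Finally I would set $V_\varepsilon=Q\cup\exp(\mathcal V_\varepsilon)$ and verify that it is an open neighbourhood of $Q$ with $V_\varepsilon\setminus Q=\exp(\mathcal V_\varepsilon)$. The disjointness $\exp(\mathcal V_\varepsilon)\cap Q=\emptyset$ follows from $d_F(P,\exp(v))=F(v)>0$, so $\exp(v)\notin P\supset Q$. The delicate point, and what I expect to be the main obstacle, is openness at the points of $Q$ itself, since there the orthogonal cone $\nu(Q)$ fails to be a vector bundle and $\exp$ is only $C^1$, so one cannot read off a neighbourhood of $Q$ from the rank of $d\exp$ on the zero section as in the classical vector-bundle argument. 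I would instead use a nearest-point (foot-point) argument: given $p\in Q$, for every $q$ sufficiently close to $p$ a Weierstrass argument inside a precompact convex neighbourhood, exactly as in the proof of Theorem \ref{TeoremaPrincipal}, produces a minimizing geodesic from $P$ to $q$; by continuity and the openness of $Q$ its foot point lies in $Q$ once $q$ is close enough to $p$, and by the first-variation characterization of minimizers recalled in the Preliminaries (the orthogonality $g_{\dot\gamma(a)}(\dot\gamma(a),w)=0$ for $w\in T_{\gamma(a)}P$) this geodesic is orthogonal to $P$. Hence $q=\exp(v)$ for some $v\in\nu(Q)$ with $F(v)=d_F(P,q)<\varepsilon$, so $q\in\exp(\mathcal V_\varepsilon)$. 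This shows that a whole neighbourhood of each $p\in Q$ is contained in $V_\varepsilon$, so $V_\varepsilon$ is open; combined with the diffeomorphism onto $\exp(\mathcal V_\varepsilon)=V_\varepsilon\setminus Q$, this proves that $\mathcal V_\varepsilon$ is a tubular neighbourhood of $Q$.
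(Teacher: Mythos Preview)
Your proposal is correct and follows essentially the same route as the paper: take $\varepsilon$ from Corollary \ref{precompactQ}, shrink it via the compactness upgrade of Proposition \ref{focalpoints} that you spell out, then invoke Lemma \ref{lemma3} for injectivity and Proposition \ref{localdifeo} for the local-diffeomorphism property. You are in fact more careful than the paper, which stops at ``injective local diffeomorphism, hence a diffeomorphism'' and never argues that $V_\varepsilon=Q\cup\exp(\mathcal V_\varepsilon)$ is open at the points of $Q$; your foot-point argument is a reasonable way to fill that omitted step.
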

\begin{proof}
Choose $\varepsilon>0$ given by Corollary \ref{precompactQ}. In particular, by Lemma \ref{lemma3} we obtain that $\exp:{\mathcal V}_\varepsilon\rightarrow M$ is injective. Moreover, Proposition \ref{focalpoints} allows us to take $\varepsilon>0$ smaller if necessary in such a way that there are no $P$-focal points in $\gamma_v:[0,1]\rightarrow M$ for all $v\in {\mathcal V}_\varepsilon$. Then by Proposition \ref{localdifeo}, $\exp|_{{\mathcal V}_\varepsilon}$ is an injective local diffeomorphism, and then a diffeomorphism.
\end{proof}
\begin{corollary}
Let $(M,F)$ be a Finsler manifold and $P$ an embedded submanifold of $M$. Then there exists an open subset of $P$ where the distance from $P$ is smooth.
\end{corollary}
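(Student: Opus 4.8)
The plan is to read off the smoothness of the distance function directly from the geometric tubular neighbourhood produced in Theorem \ref{tubneigh}; once that theorem is in hand, the corollary is essentially formal. First I would fix any point of $P$ and choose a precompact open subset $Q\subset P$ containing it (for instance the image of a small coordinate ball), and apply Theorem \ref{tubneigh} to obtain $\varepsilon>0$ and the open set $\mathcal V_\varepsilon=\{v\in\nu(Q):F(v)<\varepsilon\}$ such that $\exp:\mathcal V_\varepsilon\to V_\varepsilon\setminus Q$ is a diffeomorphism onto the open subset $V_\varepsilon\setminus Q\subset M$, and such that for every $v\in\mathcal V_\varepsilon$ the geodesic $\gamma_v|_{[0,1]}$ minimizes the distance from $P$ to $\gamma_v(1)=\exp(v)$.

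The central observation is that on this tubular neighbourhood the distance from $P$ is a composition of smooth maps. Indeed, for $v\in\mathcal V_\varepsilon$ the geodesic has constant speed $F(\dot\gamma_v)\equiv F(v)$, so $\ell_F(\gamma_v|_{[0,1]})=F(v)$; since $\gamma_v|_{[0,1]}$ minimizes from $P$, this yields $d_F(P,\exp(v))=F(v)$. Writing $q=\exp(v)$ and inverting, I obtain on $V_\varepsilon\setminus Q$ the identity
\[ d_F(P,\cdot)=F\circ\left(\exp|_{\mathcal V_\varepsilon}\right)^{-1}. \]
Here $\left(\exp|_{\mathcal V_\varepsilon}\right)^{-1}$ is smooth because $\exp|_{\mathcal V_\varepsilon}$ is a diffeomorphism, and $F$ is smooth on $\mathcal V_\varepsilon$ because, by definition, $\nu(Q)\subset TM\setminus\mathbf{0}$, so $\mathcal V_\varepsilon$ stays away from the zero section, the only locus where $F$ may fail to be differentiable. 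Consequently $d_F(P,\cdot)$ is smooth on the open subset $V_\varepsilon\setminus Q$ of $M$, which together with $Q$ forms a neighbourhood of the open piece $Q$ of $P$; this is exactly the asserted open set.

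I expect the only genuine subtlety to be the behaviour at $P$ itself, which accounts for the qualification in the abstract that smoothness need not hold on $P$. The identity $d_F(P,\cdot)=F\circ(\exp|_{\mathcal V_\varepsilon})^{-1}$ degenerates precisely as the orthogonal vector $v$ approaches the zero section: there $F$ is in general only $C^0$ across $\mathbf{0}$, and moreover $\exp|_{\mathcal V_\varepsilon}$ was only shown to be a diffeomorphism onto $V_\varepsilon\setminus Q$, not across $Q$. Hence no more than smoothness on $V_\varepsilon\setminus Q$ can be extracted by this argument, and that is all that is claimed. The entire weight of the statement therefore rests on Theorem \ref{tubneigh}, so the \emph{only} step requiring care is making the reduction to it precise, namely verifying the formula $d_F(P,\exp(v))=F(v)$ from minimization and constant speed, and checking that $F$ and the inverse exponential are composed away from the zero section.
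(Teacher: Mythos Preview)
Your proposal is correct and is exactly the argument the paper has in mind: the corollary is stated immediately after Theorem \ref{tubneigh} with no proof at all, so the intended justification is precisely the identity $d_F(P,\cdot)=F\circ(\exp|_{\mathcal V_\varepsilon})^{-1}$ on $V_\varepsilon\setminus Q$ that you spell out. Your reading of the statement (smoothness on an open neighbourhood of $Q$ in $M$, away from $Q$ itself) agrees with the formulation in the abstract, and your remarks on why nothing better can be expected across $P$ are also in line with the paper.
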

\begin{remark}
A similar result can be obtained for the distance to $P$ by considering the reverse Finsler metric $\tilde F$ defined as $\tilde F(v)=F(-v)$ for every $v\in TM$. Taking into account the reverse metric $\tilde F$ we  can also obtain a sort of forward tubular neighbourhood  in which we will use the backward exponential. 
\end{remark}



\section*{Acknowledgements} The authors warmly acknowledge discussions with  Professor Marcos Alexandrino about the results on this paper.

\end{document}